\RequirePackage{fix-cm}
\documentclass{article}
\usepackage[utf8]{inputenc}
\usepackage{tikz-cd}
\usetikzlibrary{cd}
\usepackage{amsmath,amssymb,amsthm,amsfonts,amscd, url,mathtools}
\usepackage{mathrsfs}
\usepackage{tikz}
\usetikzlibrary{calc}
\usepackage{zref-savepos}
\usepackage{tikz}
\usepackage{graphicx,wrapfig}
\usepackage{float}
\graphicspath{ {images/} } 
\usepackage[english]{babel}
\usepackage[autostyle]{csquotes}
\usepackage{hyperref}
\usepackage{geometry}

\DeclareMathSizes{10}{10}{6}{4}

\DeclareFontFamily{U}{MnSymbolC}{}
\DeclareSymbolFont{MnSyC}{U}{MnSymbolC}{m}{n}
\DeclareFontShape{U}{MnSymbolC}{m}{n}{
    <-6>  MnSymbolC5
   <6-7>  MnSymbolC6
   <7-8>  MnSymbolC7
   <8-9>  MnSymbolC8
   <9-10> MnSymbolC9
  <10-12> MnSymbolC10
  <12->   MnSymbolC12}{}
\DeclareMathSymbol{\intprod}{\mathbin}{MnSyC}{'270}

\newtheoremstyle{problemstyle}
{\topsep} 
{20pt} 
{} 
{} 
{\scshape} 
{\newline} 
{1em} 
{} 

\newtheorem{defs}{Definition}
\numberwithin{defs}{section}
\newtheorem{prop}{Proposition}
\numberwithin{prop}{section}
\newtheorem{egs}{Example}
\newtheorem{cor}{Corollary}
\numberwithin{cor}{section}
\newtheorem{lem}{Lemma}
\numberwithin{lem}{section}
\newtheorem{mthm}{Main Theorem}
\newtheorem{thm}{Theorem}
\numberwithin{thm}{section}
\theoremstyle{problemstyle}
\newtheorem{rem}{Remark}

\newcommand{\spn}{\text{span}}

\newcommand{\into}{\hookrightarrow}

\newcommand{\bR}{\mathbb{R}}
\newcommand{\bC}{\mathbb{C}}

\newcommand{\msS}{\mathscr{S}}

\newcommand{\mfX}{\mathfrak{X}}
\newcommand{\mft}{\mathfrak{t}}
\newcommand{\mfF}{\mathfrak{F}}

\newcommand{\Hom}{\text{Hom}}

\newcommand{\bge}{\begin{equation*}}
\newcommand{\ene}{\end{equation*}}

\newcommand{\cshf}{\underline{\bC}_M^\times}

\title{Invariance of Polarization Induced by Symplectomorphisms}
\author{Ethan Ross }
\date{2021}

\begin{document}

\maketitle

\section{Introduction}

Geometric quantization roughly amounts to associating complex vector spaces (preferably Hilbert spaces) to symplectic manifolds. The terminology arises from the realization of a symplectic manifold as a classical phase space and a Hilbert space as the space of quantum wave functions. One fruitful direction in quantization has been the Kirillov-Kostant-Souriau picture in which one associates to a given symplectic manifold $(M,\omega)$ a complex Hermitian line bundle with compatible covariant derivative $(L,\nabla)\to M$ so that the curvature $curv(\nabla)$ is given by the symplectic form $\omega$. The Hilbert space of $L^2$ sections then satisfies many of the naive axioms of quantization given by Dirac \cite[axioms Q1-Q3, page 155]{wood}, but it can be \enquote{too large} in some sense. In order to correct this issue with size, one possible approach is to introduce an object called a polarization. In this paper I shall only be considering real polarizations, which are possibly singular Lagrangian foliations $P$ of the symplectic manifold $(M,\omega)$. K\"{a}hler polarizations, given by complex structures compatible with the symplectic form $\omega$, are also widely used in the literature, but shall not be considered in this paper.

From here, one has at least two choices for what a quantization could be. One option uses the covariant derivative $\nabla$, to define a sheaf of polarized sections $\msS_{(P,\nabla)}$ of $L$. I then define the sheaf quantization to be
\begin{equation}\label{shfquant}
Q_{shf}(M,P,\nabla)=\bigoplus_{n}\check{H}^n(M,\msS_{(P,\nabla)}),
\end{equation}
where $\check{H}^n(M,\msS_{(P,\nabla)})$ are the \v{C}ech cohomology groups associated to the sheaf. The other option makes use of the fact that $P$ is a foliation to define distinguished leaves $\iota:B\to M$, called Bohr-Sommerfeld leaves,  which admit non-trivial covariantly constant sections, that is, sections $s$ of the pull back bundle $(\iota^*L,\iota^*\nabla)$ satisfying $\iota^*\nabla s=0$ along $TB$. Writing $BS(P)$ for the Bohr-Sommerfeld leaves of $P$, I then define the other quantization, Bohr-Sommerfeld quantization, by
\begin{equation}\label{bohrquant}
Q_{BS}(M,P,\nabla)=\bigoplus_{B\in BS(P)} \check{H}^0(B,\msS_{(P,\nabla)}|_B),
\end{equation}
where $\msS_{(P,\nabla)}|_B$ denotes the sheaf of covariantly constant sections on $B$. Sometimes these two quantizations agree as shown by Sniatycki \cite[Theorems 1.1 and 1.2]{coho}, and other times they do not as shown by Hamilton \cite[Theorem 8.10]{ham}. Since both are of interest, I will consider both in this paper.

A famous body of results in geometric quantization are the \enquote{invariance of polarization} results, where it can be shown that two naturally arising polarizations induce the same quantization. Standard examples include the Gelfand-Zeitlin system \cite[Theorem 6.1]{guil} and the moduli space of flat $SU(2)$ connections \cite[Theorem 8.3]{lisa}. In this paper, I show a new kind of invariance theorem arising from the action of a symplectomorphism on the polarizations. The reason this invariance is \enquote{new} is due to the fact that I am comparing quantizations coming from two real polarizations. The more classic results cited above arose from comparing the quantizations of a real polarization with a K\"{a}hler polarization. 

In particular, given a real polarization $P$ and a symplectomorphism $\Phi$ on $(M,\omega)$, I define a new real polarization $\Phi^*P$ via the pushforward of the inverse of $\Phi$. It then appears to be a tautology that the quantizations of $M$ with respect to $P$ and $\Phi^*P$ should agree, however I could only show this holds if the symplectomorphism $\Phi$ lifts to a connection-preserving isomorphism on the prequantum line bundle $(L,\nabla)\to M$. It is not known to the author if this condition is necessary.

To set up the statements of the main results, let $\cshf$ denote the sheaf of locally constant, non-vanishing complex functions on $M$. I then obtain the following two theorems.

\begin{mthm}
Let $(M,\omega)$ be a symplectic manifold, $P$ a real polarization, $\Phi:M\to M$ a symplectomorphism, and $(L,\nabla)\to M$ a prequantum line bundle. If $\check{H}^1(M,\cshf)=0$, then $\Phi$ induces an isomorphism between the sheaf quantizations
\bge
Q_{shf}(M,P,\nabla)\to Q_{shf}(M,\Phi^*P,\nabla),
\ene
where $Q_{shf}(M,P,\nabla)$ is defined in Equation \ref{shfquant}.
\end{mthm}

\begin{mthm}
Let $(M,\omega)$ be a symplectic manifold, $P$ a real polarization, $\Phi:M\to M$ a symplectomorphism, and $(L,\nabla)\to M$ a prequantum line bundle. If $\check{H}^1(M,\cshf)=0$, then $\Phi$ induces an isomorphism between the Bohr-Sommerfeld quantizations
\bge
Q_{BS}(M,P,\nabla)\to Q_{BS}(M,\Phi^*P,\nabla),
\ene
where $Q_{BS}(M,P,\nabla)$ is defined in Equation \ref{bohrquant}
\end{mthm}

The upshot of the proofs of these theorems is that the isomorphism can explicitly be defined via lifts of the symplectomorphism $\Phi$ to the prequantum line bundle. Historically, most invariance results arise from counting the dimensions of the quantizations with respect to two choices of polarization, then showing these agree. So, even though the isomorphisms of Main Theorems 1 and 2 depend on choices, they could be considered more \enquote{canonical} in some sense. 

Once again, it should be noted here that although Main Theorems 1 and 2 are invariance of polarization results, they are of a different nature than the classical results cited above. In particular, I am only comparing the quantizations arising from two real polarizations. There is a more general notion of polarization where one consider subbundles $P$ of the complexified tangent bundle $T_\bC M$. This gives rise to three classes of polarizations: real (which I am considering in this paper), mixed, and complex. In the cases of the Gelfand-Zeitlin system and the moduli space of flat $SU(2)$ connections, the main point of interest is that quantization by naturally arising real and complex polarizations give the same quantization. The ideas in this paper do not apply to those results since symplectomorphisms do not change the \enquote{type} of a polarization. An interesting future direction of research could involve finding larger classes of symmetries than just symplectomorphisms which interpolate between the various kinds of polarizations and investigating if similar invariance results can be obtained. I am also unaware of any example of a symplectic manifold $(M,\omega)$ with $\check{H}^1(M,\cshf)\neq 0$ such that quantization is not preserved under the action of symplectomorphisms. This could also be an interesting area of future research. 

I would like to thank Dr. Lisa Jeffrey, Dan Hudson, Carrie Clark, and the referee for reading through the drafts of this paper and pointing out the numerous grammatical mistakes. 

\section{Quantization}
Let $(M,\omega)$ be a symplectic manifold.

\begin{defs}
A prequantum line bundle over $M$ is a complex Hermitian line bundle with compatible connection $(L,\nabla)\to M$ such that 
\bge
curv(\nabla)=\omega.
\ene
\end{defs}

Naively, a quantization should attach to a symplectic manifold (thought of as a classical phase space) a Hilbert space and a map taking the classical observables $C^\infty(M)$ to self-adjoint operators on the Hilbert space. Prequantum line bundles achieve both of these objectives. The Hermitian structure of the complex line bundle together with the canonical volume form $\omega^n/n!$ allows us to equip the space $\Gamma_c(L)$ of compactly supported sections with the structure of a pre-Hilbert space. Then, using the compatible covariant derivative $\nabla$, one can construct the desired map between classical and quantum observables. See Woodhouse \cite[Chapter 8]{wood} for more details. 

The reason why these are called \enquote{prequantum} line bundles and not \enquote{quantum} line bundles is due to the fact that even in the simplest cases, the resulting Hilbert space is too large in some sense. For example, equip $\bR^{2n}$ with standard coordinates $(x_1,\dots,x_n,y_1,\dots,y_n)$ and the standard symplectic structure 
\bge
\omega_0=\sum_j dx_j\wedge dy_j.
\ene 
Also define the prequantum line bundle $(L,\nabla)$ over $(\bR^{2n},\omega_0)$ by
\bge
L=\bR^{2n}\times \bC,\quad \nabla=d+i\sum_{j=1}^nx_jdy_j,
\ene
where I identify the sections of $L$ with smooth complex-valued functions on $\bR^{2n}$. It's then an easy exercise to show that the resulting Hilbert space is $L^2(\bR^{2n})$. This is unsatisfactory since  the quantization of symplectic $\bR^{2n}$ should be $L^2(\bR^n)$. Furthermore, important properties like the Heisenberg uncertainty relations between the coordinates will not hold in this Hilbert space. 

An easy remedy for these problems would be to only consider functions $f$ which only depend on half the variables, say $f=f(y_1,\dots,y_n)$. Observe that these are precisely the functions which obey
\bge
\nabla_{\frac{\partial}{\partial x_j}}f=0
\ene
for each $j$, that is, functions which are covariantly constant along the Lagrangian subbundle 
\bge
P=\text{span}\{\frac{\partial}{\partial x_1},\dots,\frac{\partial}{\partial x_n}\}.
\ene
Generalizing this idea is where the concept of a polarization arises. 

\begin{defs}\label{poldef}
A (singular, real) polarization of $(M,\omega)$ is a singular subbundle $P\subset TM$ such that its sheaf of sections $\mfF_P$ satisfies the following axioms.
\begin{itemize}
    \item[(i)] (Involutivity) If $X,Y\in \mfF_P$, then so is $[X,Y]$.
    \item[(ii)] (Locally Finitely Generated) For any $x\in M$, there exists an open neighbourhood $U\subset M$ of $x$ and sections $X_1,\dots,X_k\in \mfF_P(U)$ such that
    \bge
    \mfF_P(U)=\spn_{C^\infty(U)}\{X_1,\dots,X_k\}.
    \ene
    \item[(iii)] (Lagrangian) There exists open dense subset $U\subset M$ such that for each $x\in U$, $P_x\subset T_xM$ is Lagrangian.
\end{itemize}
\end{defs}

\begin{rem}
\begin{itemize}
    \item[(i)] Involutivity and the Locally Finitely Generated property of $P$ means that it defines a singular foliation in the sense of \cite{holo}. It then automatically follows that there exists an open dense subset $U\subset M$ such that $P|_U$ is an involutive smooth subbundle of $TU$. 
    \item[(ii)]As was noted in the introduction, there are other kinds of polarizations. In the non-singular case, a polarization is an involutive Lagrangian subbundle $P\subset T_\bC M$, where $T_\bC M$ is the complexified tangent bundle. Usually it is also demanded that $E=(P+\overline{P})\cap TM$ and $D=P\cap \overline{P}\cap TM$ are also subbundles. $P$ is called real if $D=E$, complex if $E=TM$ and $D=0$, and mixed otherwise. See Andersen \cite{jorgen} for a more in depth discussion.
\end{itemize}
\end{rem}

Since Lagrangian subspaces have half the dimension of the symplectic manifold, demanding sections be covariantly constant along the directions associated to $P$ is one mechanism for cutting down the variables on which the Hilbert space depends. With this motivation, define the sheaf of covariantly constant sections $\msS_{(P,\nabla)}$ by
\begin{equation}
\msS_{(P,\nabla)}(U):=\{s\in\Gamma(U,L) \ | \ \nabla_X s=0 \ \forall \ X\in \mfF_P(U)\},
\end{equation}
for open $U\subset M$.

This enables us to formally define the sheaf quantization for the purposes of this paper.

\begin{defs}
Define the sheaf quantization of $(M,\omega)$ with respect to the prequantum line bundle $(L,\nabla)\to M$ and the polarization $P\subset TM$ by
\begin{equation}
Q_{shf}(M,P,\nabla):=\bigoplus_{n}\check{H}^n(M,\msS_{(P,\nabla)}),
\end{equation}
where $\check{H}^n(M,\msS_{(P,\nabla)})$ denotes the $n$-th \^{C}ech cohomology group of $M$ with respect to the sheaf $\msS_{(P,\nabla)}$.
\end{defs}

As alluded to in the introduction, another common way of quantizing a symplectic manifold is by a Bohr-Sommerfeld quantization. To do this, observe that if $P\subset TM$ is a polarization, then $P$ induces a decomposition of $M$ into disjoint immersed submanifolds called leaves \cite[Theorem 4.2]{suss}. If $M$ is connected, the leaves of maximal dimension are immersed Lagrangian submanifolds.

Now, if $\iota:B\to M$ is a leaf of $P$, one can pull back the covariant derivative $\nabla$ on $L$ to the pull back bundle $\iota^*L\to B$ and write $\iota^*\nabla$ for the resulting covariant derivative. Thus, define a sheaf $\msS_{(P,\nabla)}|_B$ on $B$ by
\begin{equation}
\msS_{(P,\nabla)}|_B(U)=\{s\in\Gamma(U,\iota^*L) \ | \ \iota^*\nabla_Xs=0 \ \forall X\in\mfX(U)\},
\end{equation}
for open $U\subset B$. 

\begin{defs}
A leaf $B$ of $P$ is called Bohr-Sommerfeld if
\bge
H^0(B,\msS_{(P,\nabla)}|_B)\neq 0.
\ene
Writing $BS(P)$ for the set of Bohr-Sommerfeld leaves of $P$, define the Bohr-Sommerfeld quantization of $(M,\omega)$ with respect to the prequantum line bundle $(L,\nabla)\to M$ and the polarization $P$ by
\begin{equation}
Q_{BS}(M,P,\nabla):=\bigoplus_{B\in BS(P)} H^0(B,\msS_{(P,\nabla)}|_B).
\end{equation}
\end{defs}

\begin{rem}
\begin{itemize}
    \item[(i)] An equivalent definition of a Bohr-Sommerfeld leaf used in the literature is as follows. A Bohr-Sommerfeld leaf is a leaf $\iota:B\to M$ that admits a non-trivial covariantly constant section $s:B\to \iota^*L$, i.e. a covariantly constant section which is not identically the zero section.
    \item[(ii)]If $B$ is a Bohr-Sommerfeld leaf and is connected, then it follows that 
    \bge
    H^0(B,\msS_{(P,\nabla)}|_B)\cong \bC.
    \ene
\end{itemize}
\end{rem}

\section{Lifts of Symplectomorphisms}
The main mechanism for comparing the quantizations by various polarizations is by lifts of symplectomorphisms to automorphisms of the line bundle. For the following discussion, fix a prequantum line bundle $(L,\nabla)\xrightarrow{\pi} (M,\omega)$, with $M$ connected, and a symplectomorphism $\Phi:M\to M$.

\begin{defs}
A lift of the symplectomorphism $\Phi$ is a diffeomorphism $F:L\to L$ such that
\begin{itemize}
    \item[(i)] $\pi\circ F=\Phi\circ \pi$.
    \item[(ii)] For each $x\in M$, the map
    \bge
    F_x:L_x\to L_{\Phi(x)}
    \ene
    is a linear isomorphism.
\end{itemize}
\end{defs}

\begin{egs}
If $L=M\times \bC$ is a trivial line bundle, then a lift of $\Phi:M\to M$ is given by
\bge
F:L\to L;\quad (x,z)\mapsto (\Phi(x),f(x)z),
\ene
where $f\in C^\infty(M,\bC^\times)$ is a non-vanishing smooth function. 
\end{egs}

\begin{rem}
Let $X$ and $Y$ be topological spaces and $f:X\to Y$ a continuous map. $f$ then defines a functor between the category of sheaves on $X$ to the category of sheaves on $Y$. Indeed, let $\msS$ be a sheaf of $X$ and define sheaf $f_*\msS$ on $Y$ by
\begin{equation}\label{pushsheaf}
f_*\msS(U):=\msS(f^{-1}(U))
\end{equation}
for open $U\subset Y$. Similarly, given a natural transformation
\bge
\eta:\msS\to \msS'
\ene
between sheaves $\msS$ and $\msS'$ on $X$, we can define natural transformation
\bge
f_*\eta:f_*\msS\to f_*\msS'
\ene
by
\begin{equation}\label{pushnat}
f_*\eta(U)=\eta(f^{-1}(U)):\msS(f^{-1}(U))\to \msS'(f^{-1}(U))
\end{equation}
for open $U\subset Y$. 
\end{rem}

Write $\Gamma_L$ for the sheaf of sections of $L$. A lift $F$ of $\Phi$ then defines a natural isomorphism
\begin{equation}\label{nattran}
F_*:\Phi_*\Gamma_L\to \Gamma_L,
\end{equation}
where for each open $U\subset M$ and each $s\in\Gamma_L(\Phi^{-1}(U))$, define $F_*s\in\Gamma_L(U)$ by
\bge
F_*s(x):=F_{\Phi^{-1}(x)}(s(\Phi^{-1}(x))),\quad x\in U.
\ene
It's easy to see that $F_*$ respects restrictions and hence is indeed a natural transformation. The inverse to $F$ thus also induces a natural transformation
\bge
(F^{-1})_*:(\Phi^{-1})_*\Gamma_L\to \Gamma_L.
\ene
Pushing this forward by $\Phi$, 
\bge
\Phi_*(F^{-1})_*:\Gamma_L\to \Phi_*\Gamma_L,
\ene
we thus get our inverse to $F_*$.

Now, let $\mfX_M$ denote the sheaf of vector fields on $M$. Since $\Phi$ is a diffeomorphism, its derivative $d\Phi$ also defines a natural isomorphism
\bge
\Phi_*:\Phi_*\mfX_M\to \mfX_M.
\ene

Thus, since the covariant derivative $\nabla$ is a natural transformation $\mfX_M\otimes \Gamma_L\to \Gamma_L$, the natural isomorphisms $\Phi_*$ and $F_*$ are then used to define a pullback covariant derivative.

\begin{defs}
Given a lift $F:L\to L$ of $\Phi$, define the pullback covariant derivative $F^*\nabla$ to be the unique natural transformation
\bge
F^*\nabla:\mfX_M\otimes \Gamma_L\to \Gamma_L
\ene
so that the diagram commutes
\bge
\begin{tikzcd}
\Phi_*(\mfX_M\otimes \Gamma_L)\arrow[rr,dashed,"\Phi_*(F^*\nabla)"]\arrow[dd,"\Phi_*\times F_*"] && \Phi_*\Gamma_L\arrow[dd,"F_*"]\\
&&\\
\mfX_M\otimes \Gamma_L\arrow[rr,"\nabla"] && \Gamma_L
\end{tikzcd}
\ene
\end{defs}

\begin{rem}
For any lift $F$ of $\Phi$, $F^*\nabla$ is a prequantum covariant derivative for $L\to (M,\omega)$, that is, the curvature of $F^\nabla$ is the symplectic form $\omega$,
\bge
\text{curv}(F^*\nabla)=\omega.
\ene
This can be shown as follows. Let $U\subset M$ be open and $s\in\Gamma_L(U)$ non-vanishing, then there exists unique $\alpha\in\Omega^1(U)$ such that
\begin{equation}
\nabla s=i\alpha\otimes s.
\end{equation}
Since $\text{curv}(\nabla)=\omega$, it follows that $d\alpha=\omega|_{U}$. Now, define $t\in\Gamma_L(\Phi^{-1}(U))$ uniquely by $F_*t=s$. It's then a matter of unfolding the above definitions to see that $t$ is non-vanishing and that
\begin{equation}
F^*\nabla t=i\Phi^*\alpha\otimes t.
\end{equation}
Thus, since $\Phi$ is a symplectomorphism, 
\bge
d\Phi^*\alpha=\omega|_{\Phi^{-1}(U)}.
\ene
Hence, $\text{curv}(F^*\nabla)=\omega$.
\end{rem}

It will be important later to determine when a symplectomorphism $\Phi$ admits a covariant derivative preserving lift, that is, a lift $F:L\to L$ satisfying
\bge
F^*\nabla=\nabla.
\ene
If $\Phi$ does admit such a lift, then $\Phi$ will always preserve the quantization with respect to any polarization.

\begin{lem}\label{KEYLEM}
Let $\cshf$ be the sheaf of locally constant $\bC^\times$-valued functions on $M$. If $\check{H}^1(M,\cshf)=0$, then $\Phi$ admits a lift $F:L\to L$ satisfying $F^*\nabla=\nabla$.
\end{lem}

\begin{proof}
Choose a good cover $\{U_j\}$\cite{bott}, that is, for any finite collection of indices $j_1,\dots,j_k$, the intersection 
\begin{equation}
U_{j_1\cdots j_k}:=U_{j_1}\cap\cdots\cap U_{j_k}
\end{equation}
is either empty or contractible. One can always choose such a cover since contractible covers are cofinal among all covers. Furthermore, since $U_j$ is contractible for each $j$, there exists a non-vanishing section $s_j\in\Gamma_L(U_j)$. 

Since each of the $s_j$ are non-vanishing, we obtain useful local data.
\begin{itemize}
    \item (Local primitives of $\omega$) As was noted above, on each $U_j$ there exists unique $\alpha_j\in\Omega^1(U_j)$ such that
    \begin{equation}
    \nabla s_j=i\alpha_j\otimes s_j.
    \end{equation}
    These $\alpha_j$ all satisfy
    \begin{equation}
    d\alpha_j=\omega|_{U_j}.
    \end{equation}
    
    \item (Transition Functions) If $U_{jk}\neq \emptyset$, then there exists unique $\lambda_{jk}\in C^\infty(U_{jk},\bC^\times)$ satisfying
    \begin{equation}
    \lambda_{jk} s_k|_{U_{jk}}=s_j|_{U_{jk}}.
    \end{equation}
\end{itemize}
It's a straightforward calculation to show that on overlaps $U_{jk}$, the local primitives $\alpha_j$ and the transition functions $\lambda_{jk}$ are related by
\begin{equation}\label{tran2prim}
\alpha_j-\alpha_k=-id\log(\lambda_{jk})
\end{equation}
for any choice of branch of $\log$.

Now, using the inverse image of $\Phi$, we obtain another good cover $\{\Phi^{-1}(U_j)\}$, and so we can choose a new collection of non-vanishing local sections where $t_j\in\Gamma_L(\Phi^{-1}(U_j))$. Write  $\beta_j$ for the associated local primitives of $\omega$ and write $\mu_{jk}$ for the associated transition functions. The sections $t_j$ will now be suitably re-scaled to define the lift $F$.

First, we may assume that $\beta_j=\Phi^*\alpha_j$. Otherwise, since $U_j$ is contractible and since $\beta_j$ and $\Phi^*\alpha_j$ are local primitives of $\omega$, there exists $f_j\in C^\infty(\Phi^{-1}(U_j))$ such that
\bge
\beta_j-\Phi^*\alpha_j=df_j.
\ene
Now redefine $t_j':=e^{-if_j}t_j$. It then follows that
\bge
\nabla t_j'=i\Phi^*\alpha_j\otimes t_j'.
\ene

Next, we may assume that the transition functions $\mu_{jk}$ for the $t_j$ are related to the transition functions of the $s_j$ by
\bge
\mu_{jk}=\lambda_{jk}\circ \Phi.
\ene
Indeed, otherwise using equation (\ref{tran2prim}), we have
\bge
-id\log(\lambda_{jk})=\alpha_j-\alpha_k
\ene
and, by assumption, we also have
\bge
-id\log(\mu_{jk})=\Phi^*\alpha_j-\Phi^*\alpha_k.
\ene
Thus,
\bge
d\log(\mu_{jk})=d\log(\Phi^*\lambda_{jk}).
\ene
Thus, since $U_{jk}$ is contractible, there exists $C_{jk}\in \bC^\times$ such that
\bge
\mu_{jk}=C_{jk}\Phi^*\lambda_{jk}.
\ene
Observe that on triple overlaps $U_{jk\ell}$ the following cocycle condition holds
\bge
\lambda_{k\ell}\lambda^{-1}_{j\ell}\lambda_{jk}=1.
\ene
Thus, the constants $C_{jk}$ satisfy an analogous cocycle condition
\bge
C_{k\ell}C^{-1}_{j\ell}C_{jk}=1.
\ene
Hence, the constants $C_{jk}$ define a closed 2-cocycle $\{C_{jk}\}\in \check{Z}^1(\{U_j\},\cshf)$. Since  $\check{H}^1(\{U_j\},\cshf)=0$, there exists a collection of constants $\{e_j\}\subset \bC^\times$ such that
\bge
C_{jk}=e_ke_j^{-1}.
\ene
Now, redefine $t'_j:=e_jt_j$. It's then straightforward to check that
\bge
\Phi^*\lambda_{jk} t_k'=t_j'
\ene
and the local primitives are still $\Phi^*\alpha_j$
\bge
\nabla t_j'=i\Phi^*\alpha_j\otimes t_j'.
\ene

Thus,  we may locally define lifts $F_j$ of $\Phi$. For each $j$, set
\bge
F_j:L|_{\Phi^{-1}(U_j)}\to L|_{U_j}
\ene
uniquely by $(F_j)_*t_j=s_j$. Since the transition functions for $t_j$ are given by the pullbacks of the transition functions of the $s_j$, it follows that $F_j=F_k$ on overlaps $U_{jk}$. Thus, we obtain a global lift $F:L\to L$ of $\Phi$. By construction, we have
\bge
\nabla t_j=i\Phi^*\alpha_j\otimes t_j.
\ene
Further,
\bge
F^*\nabla t_j=i\Phi^*\alpha_j\otimes t_j.
\ene
Therefore, $F^*\nabla=\nabla$.

\end{proof}

\section{Action by Symplectomorphisms on Polarizations}
Consider the motivating example for polarizations, $\bR^{2n}$ with the standard prequantum line bundle introduced in the beginning of section 2. Physicists will (implicitly) use one of two polarizations to cut down on the number of variables. Recall,  $\bR^{2n}$ is given coordinates $x_1,\dots,x_n,y_1,\dots,y_n$, then there are two naturally arising polarizations:
\bge
P=\text{span}\{\frac{\partial}{\partial x_1},\dots,\frac{\partial}{\partial x_n}\}
\ene
and
\bge
Q=\text{span}\{\frac{\partial}{\partial y_1},\dots,\frac{\partial}{\partial y_n}\}.
\ene
Using $P$ as the polarization gives the \enquote{momentum representation} $Q_{shf}(\bR^{2n},P)$ and $Q$ returns the \enquote{position representation} $Q_{shf}(\bR^{2n},Q)$. If one were to equip these vector spaces with a Hilbert space structure, this can be done using half-forms\cite{jorgen}, then both would be isomorphic to $L^2(\bR^n)$. In particular, they could be viewed as the same quantization. 

Another, perhaps more geometric approach would be to realize that 
\bge
\Phi:\bR^{2n}\to \bR^{2n};\quad (x_1,\dots,x_n,y_1,\dots,y_n)\mapsto (y_1,\dots,y_n,-x_1,\dots,-x_n)
\ene
is a symplectomorphism which fibrewise swaps the polarizations
\bge
\Phi_*P=Q,\quad \Phi_*Q=P.
\ene
Furthermore, if $U\subset \bR^{2n}$ is open, then it's easy to see that precomposition by $\Phi$ defines an isomorphism of sheaves
\bge
\Phi^*:\msS_{(P,\nabla)}\to \Phi_*\msS_{(Q,\nabla)}.
\ene
Hence, $\Phi$ induces an isomorphism 
\bge
Q_{shf}(\bR^{2n},P)\to Q_{shf}(\bR^{2n},Q)
\ene
between the momentum and position representations as desired.

I now want to generalize the above construction to more general prequantum line bundles and polarizations. 

\begin{defs}
Fix a symplectic manifold $(M,\omega)$, a symplectomorphism $\Phi:M\to M$, and a polarization $P\subset TM$. Define a new polarization $\Phi^*P\subset TM$ point-wise by
\begin{equation}\label{polonsymp}
\Phi^*P_x:=d_{\Phi(x)}\Phi^{-1}(P_{\Phi(x)}).
\end{equation}
\end{defs}

Since $\Phi$ is a diffeomorphism it easily follows that $\Phi^*P$ is an involutive, locally finitely generated, smooth singular subbundle of $TM$. Further, $\Phi$ being a symplectomorphism gives us that $\Phi^*P$ is generically Lagrangian in the sense of Definition \ref{poldef}.

\subsection{Symplectomorphisms and Sheaf Quantization}

For this section, fix a prequantum line bundle $(L,\nabla)\to (M,\omega)$, a polarization $P\subset TM$, and a symplectomorphism $\Phi:M\to M$. Recall that $\mfF_P$ denotes the sheaf of sections of $P$.

\begin{prop}\label{natiso1}
Let $F:L\to L$ be a lift of $\Phi$. Then the inverse of $F$ defines a natural isomorphism
\bge
\Phi_*(F^{-1})_*:\msS_{(P,\nabla)}\to \Phi_*\msS_{(\Phi^*P,F^*\nabla)},
\ene
where $\Phi_*(F^{-1})_*$ is the pushforward of the natural isomorphism $(F^{-1})_*$ by $\Phi$ as in Equation (\ref{pushnat}).
\end{prop}

\begin{proof}
Let $U\subset M$ be open. Recall that the lift $F$ defines an isomorphism
\bge
F_*:\Gamma_L(\Phi^{-1}(U))\to \Gamma_L(U)
\ene
from equation (\ref{nattran}). Fix $s\in \msS_{(P,\nabla)}(U)$. I will now show that $(F^{-1})_*s\in \msS_{(\Phi^*P,F^*\nabla)}(\Phi^{-1}(U))$, that is, for any $X\in \mfF_{\Phi^*P}(\Phi^{-1}(U))$, I want to show 
\bge
(F^*\nabla)_X F^{-1}_*s=0.
\ene
This is quite straightforward since by definition of $F^*\nabla$, we have
\bge
(F^*\nabla)_X F^{-1}_*s=F^{-1}_*(\nabla_{\Phi_*X} s).
\ene
By construction of $\Phi^*P$, we have $\Phi_*X\in \mfF_P(U)$. Hence, $\nabla_X s=0$ and thus $(F^*\nabla)_X F^{-1}_*s=0$.  Clearly $F_*:\Phi_*\msS_{(\Phi^*P,F^*\nabla)}\to \msS_{(P,\nabla)}$ is the inverse.
\end{proof}

\begin{cor}\label{quantiso}
For any lift $F:L\to L$ of $\Phi$, there exists a canonical isomorphism
\bge
Q_{shf}(M,P,\nabla)\to Q_{shf}(M,\Phi^*P,F^*\nabla).
\ene
\end{cor}

\begin{proof}
The natural isomorphisms of Proposition \ref{natiso1} give an isomorphism
\bge
\check{H}^\bullet(M,\msS_{(P,\nabla)})\to \check{H}^\bullet(M,\Phi_*\msS_{(\Phi^*P,F^*\nabla)})
\ene
All that's left to show is that there is an isomorphism between cohomology groups
\bge
\check{H}^\bullet(M,\Phi_*\msS_{(\Phi^*P,F^*\nabla)})\cong \check{H}^\bullet(M,\msS_{(\Phi^*P,F^*\nabla)}).
\ene

For sake of convenience, set $\msS=\msS_{(\Phi^*P,F^*\nabla)}$. For any open cover $\{U_j\}$ we have equality of chain complexes
\bge
\check{C}^\bullet(\{U_j\},\msS)=\check{C}^\bullet(\{\Phi^{-1}(U_j),\Phi_*\msS).
\ene
Thus, we get an isomorphism of cohomologies
\bge
\check{H}^\bullet(\{U_j\},\msS)\to\check{H}^\bullet(\{\Phi^{-1}(U_j)\},\Phi_*\msS).
\ene
It's easy to check that this isomorphism is compatible with refinement, hence induces a map
\bge
\check{H}^\bullet(M,\msS)\to \check{H}^\bullet(M,\Phi_*\msS).
\ene
It follows since $\Phi$ is a diffeomorphism that it is a bijection on open covers, hence the above map is in fact an isomorphism. See chapter 10 of Wedhorn\cite{torsten} for more details.
\end{proof}

\begin{proof}
(Of Main Theorem 1)
Applying Lemma \ref{KEYLEM} there exists a lift $F:L\to L$ of $\Phi$ such that $F^*\nabla=\nabla$. Corollary \ref{quantiso} then gives the desired isomorphism
\bge
Q_{shf}(M,P,\nabla)\to Q_{shf}(M,\Phi^*P,\nabla).
\ene
\end{proof}

\subsection{Symplectomorphisms and Bohr-Sommerfeld Leaves}
As before, fix a polarization $P\subset TM$ and a symplectomorphism $\Phi:M\to M$. 

\begin{lem}
If $\iota:B\to M$ is a leaf of $P$, then $\Phi^{-1}\circ \iota:B\to M$ is a leaf of $\Phi^*P$. 
\end{lem}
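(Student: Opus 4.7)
The plan is to verify the two defining properties of a leaf: that $\Phi^{-1} \circ \iota$ is an injective immersion whose tangent image is the distribution $\Phi(P)$, and that this immersed submanifold is maximal connected among integral manifolds of $\Phi(P)$. Since the statement is essentially a transport of structure by the diffeomorphism $\Phi^{-1}$, the proof should amount to unwinding definitions.

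First I would observe that $\Phi^{-1}\colon M \to M$ is a smooth diffeomorphism, hence $\tilde\iota := \Phi^{-1}\circ\iota \colon B \to M$ is again an injective immersion from the same smooth manifold $B$. Next I would perform the key tangent-space computation at an arbitrary $x\in B$. On one hand,
\bge
\tilde\iota_{*,x}(T_xB) \;=\; (\Phi^{-1})_{*,\iota(x)}\bigl(\iota_{*,x}(T_xB)\bigr) \;=\; (\Phi^{-1})_{*,\iota(x)}\bigl(P_{\iota(x)}\bigr),
\ene
using that $\iota$ is a leaf of $P$. On the other hand, the definition of the pulled-back polarization at $\tilde\iota(x) = \Phi^{-1}(\iota(x))$ gives
\bge
\Phi(P)_{\tilde\iota(x)} \;=\; (\Phi^{-1})_{*,\Phi(\tilde\iota(x))}\bigl(P_{\Phi(\tilde\iota(x))}\bigr) \;=\; (\Phi^{-1})_{*,\iota(x)}\bigl(P_{\iota(x)}\bigr).
\ene
The two expressions agree, so $\tilde\iota(B)$ is an integral manifold of $\Phi(P)$ in the Sussman sense.

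For maximality and connectedness, I would argue as follows. Connectedness is preserved because $\Phi^{-1}$ is a homeomorphism and $\iota(B)$ is connected. For maximality, suppose $\iota'\colon B'\to M$ were a connected integral manifold of $\Phi(P)$ properly containing $\tilde\iota(B)$ in its image. Applying $\Phi$ and running the same tangent-space computation in reverse shows $\Phi\circ\iota'$ would be a connected integral manifold of $P$ properly containing $\iota(B)$, contradicting maximality of $\iota$. Therefore $\tilde\iota\colon B\to M$ is a leaf of $\Phi(P)$.

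The main potential obstacle is really bookkeeping around Sussman's generalized notion of leaf for a singular distribution: one must use that the leaves form a partition of $M$ into maximal connected integral immersed submanifolds and that a diffeomorphism intertwining two such distributions carries leaves bijectively to leaves. Beyond that observation, the proof is essentially a one-line computation together with functoriality of pushforward, so I do not expect any genuine technical difficulty.
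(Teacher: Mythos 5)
Your proposal is correct and follows essentially the same route as the paper: the identical tangent-space computation $(\Phi^{-1}\circ\iota)_*(T_xB)=\Phi^{-1}_*P_{\iota(x)}=\Phi(P)_{\Phi^{-1}\circ\iota(x)}$, followed by transporting any competing integral manifold back through $\Phi$ and invoking maximality of the original leaf. The only cosmetic difference is that the paper phrases maximality via an open embedding $F:B'\to B$ factoring the competing integral manifold, while you phrase it as a containment-of-images contradiction; these are equivalent formulations of the same argument.
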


\begin{proof}
By construction of $\Phi^*P$ it is clear that if $\iota:B\to M$ is an immersed integral submanifold of $P$, then $\Phi^{-1}\circ \iota:B\to M$ is an immersed integral submanifold of $\Phi^*P$. Indeed, for any $x\in B$, by definition
\bge
\iota_*(T_xB)=P_{\iota(x)}.
\ene
Hence,
\bge
(\Phi^{-1}\circ\iota)_*(T_xB)=\Phi^{-1}_*P_{\iota(x)}=\Phi^*P_{\Phi^{-1}\circ\iota(x)}.
\ene

The only issue now is maximality of $\Phi^{-1}\circ \iota:B\to M$. Suppose $\iota':B'\to M$ is another integral submanifold of $\Phi^*P$ through $\Phi^{-1}\circ\iota(x)$ for some $x\in B$. Then, by the same argument, $\Phi\circ\iota':B'\to M$ is an integral submanifold of $P$ through $\iota(x)$. Thus, by the maximality of $\iota:B\to M$ there exists an open embedding
\bge
H:B'\to B
\ene
making the diagram commute
\bge
\begin{tikzcd}
& M &\\
B\arrow[ur,"\iota"] && B'\arrow[ul,"\Phi\circ\iota'"]\arrow[ll,"H"]
\end{tikzcd}
\ene
Since $\Phi$ is a diffeomorphism, it then follows that the below diagram commutes
\bge
\begin{tikzcd}
& M &\\
B\arrow[ur,"\Phi^{-1}\circ\iota"] && B'\arrow[ul,"\iota'"]\arrow[ll,"H"]
\end{tikzcd}
\ene
Hence, $\Phi^{-1}\circ\iota:B\to M$ is a maximal integral submanifold of $\Phi^*P$.
\end{proof}

For the sake of convenience, simply write $B$ for the data of a leaf $\iota:B\to M$ of $P$ and $\Phi^*B$ for the data of $\Phi^{-1}\circ\iota:B\to M$. It then becomes clear that $\Phi$ induces a bijection between the leaves of $P$ and $\Phi^*P$. 

\begin{cor}\label{leaflem}
If $Lf(P)$ denotes the set of leaves of $P$, then the map
\begin{align*}
Lf(P)\to Lf(\Phi^*P);\quad B\mapsto \Phi^*B.
\end{align*}
is a bijection. 
\end{cor}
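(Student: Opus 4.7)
The plan is to deduce the corollary formally from the preceding lemma by using the fact that $\Phi^{-1}$ is itself a symplectomorphism. First I would apply the preceding lemma with $\Phi$ replaced by $\Phi^{-1}$ and $P$ replaced by $\Phi(P)$. This yields a map $Lf(\Phi(P)) \to Lf(\Phi^{-1}(\Phi(P)))$ sending a leaf $\iota': C \to M$ of $\Phi(P)$ to the leaf with immersion $\Phi \circ \iota'$. Before interpreting this as a map back to $Lf(P)$, I would verify the pointwise identity $\Phi^{-1}(\Phi(P))_x = P_x$; this is an immediate chain-rule computation using the defining formula $\Phi(P)_x = \Phi^{-1}_*(P_{\Phi(x)})$. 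In the notation established just before the corollary, the resulting map $Lf(\Phi(P)) \to Lf(P)$ may be written $C \mapsto \Phi^{-1}(C)$.

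Next I would check that the two assignments $B \mapsto \Phi(B)$ and $C \mapsto \Phi^{-1}(C)$ are mutually inverse on the level of leaves. Given a leaf $\iota: B \to M$ of $P$, composing the two maps in succession produces the leaf of $P$ with immersion $\Phi \circ (\Phi^{-1} \circ \iota) = \iota$, which is $B$ itself; the reverse composition is entirely symmetric. This establishes bijectivity.

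I do not anticipate any substantive obstacle. The corollary is essentially a bookkeeping statement asserting that the leaf-level assignment constructed in the preceding lemma is compatible with the group structure on symplectomorphisms. The only point requiring mild care is the identification $\Phi^{-1}(\Phi(P)) = P$, which is forced by functoriality of the pushforward on singular subbundles of $TM$.
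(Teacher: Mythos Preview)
Your proposal is correct and matches the paper's intent: the paper states the corollary immediately after observing that $\Phi(B)$ denotes the leaf $\Phi^{-1}\circ\iota$ and remarks that ``it then becomes clear that we have defined a bijection induced by $\Phi$,'' without giving further detail. Your argument---applying the preceding lemma to $\Phi^{-1}$ and $\Phi(P)$, checking $\Phi^{-1}(\Phi(P))=P$, and verifying the two assignments are mutually inverse---is exactly the natural way to unpack that sentence.
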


\begin{lem}\label{natiso2}
Let $\Phi$ be a symplectomorphism and $F:L\to L$ a lift. Then, for any leaf $\iota:B\into M$, $F$ induces an isomorphism
\begin{equation}\label{bohriso}
(F_B)_*:H^0(B,\msS_{(\Phi^*P,F^*\nabla)}|_{\Phi^*B})\to H^0(B,\msS_{(P,\nabla)}|_B).
\end{equation}
\end{lem}

\begin{proof}
Fix a leaf $\iota:B\to M$ and a lift $F:L\to L$ of $\Phi$. Define an isomorphism of line bundles
\bge
\begin{tikzcd}
(\Phi^{-1}\circ \iota)^*L\arrow[rr,"F_B"]\arrow[dr]&&\iota^*L\arrow[dl]\\
&B&
\end{tikzcd}
\ene
by
\bge
F_B(x,z)=(x,F_{\iota(x)}(z)).
\ene
One then checks that the following identity holds
\bge
F_B^*(\iota^*\nabla)=(\Phi^{-1}\circ \iota)^*(F^*\nabla).
\ene
Thus, if $s:B\to (\Phi^{-1}\circ \iota)^*L$ is a section satisfying
\bge
(\Phi^{-1}\circ \iota)^*(F^*\nabla)_Xs=0
\ene
for all vector fields $X$ on $B$, then 
\bge
(\iota^*\nabla)_X (F_B)_*s=(F_B)_*((\Phi^{-1}\circ \iota)^*(F^*\nabla)_X s)=(F_B)_*(0)=0.
\ene
Hence, one obtains a map
\bge
(F_B)_*:H^0(B,\msS_{(\Phi^*P,F^*\nabla)}|_{\Phi^*B})\to H^0(B,\msS_{(P,\nabla)}|_B).
\ene
It's clear that $(F_B^{-1})_*$ is the inverse.
\end{proof}

\begin{proof}(Of Main Theorem 2)
By Lemma \ref{KEYLEM}, $\Phi$ admits a lift $F:L\to L$ such that $F^*\nabla=\nabla$. Due to Lemma \ref{natiso2}, the bijection in Corollary \ref{leaflem} restricts to a bijection
\bge
BS(P)\to BS(\Phi^*P).
\ene
Furthermore, taking the direct sum of the isomorphisms in equation (\ref{bohriso}), the desired isomorphism is given by
\bge
\bigoplus_{B\in BS(P)} (F_B)_*:Q_{BS}(M,\Phi^*P,\nabla)\to Q_{BS}(M,P,\nabla)
\ene
\end{proof}

\section{Application to Toric Geometry}
To finish off, let's discuss an application to toric varieties. There are easier proofs that don't require the machinery developed above, but it at least illustrates how the vanishing of the first cohomology with coefficients $\bC^\times$ can appear.

\begin{thm}
Let $(M,\omega)$ be a smooth compact symplectic toric variety together with a prequantum line bundle $(L,\nabla)\to M$. Then, for any choice of symplectomorphism $\Phi:M\to M$ and any choice of real polarization $P\subset TM$, there exists isomorphisms
\begin{align*}
Q_{shf}(M,P,\nabla)&\cong Q_{shf}(M,\Phi^*P,\nabla)\\
Q_{BS}(M,P,\nabla)&\cong Q_{BS}(M,\Phi^*P,\nabla)
\end{align*}
\end{thm}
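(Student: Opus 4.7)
The plan is to reduce the theorem to an immediate application of Main Theorems 1 and 2 by verifying that $\check{H}^1(M,\cshf)=0$ for every smooth compact symplectic toric variety $M$. Once this vanishing is established, both isomorphisms drop out of the Main Theorems without any further work, since the hypotheses (a symplectomorphism $\Phi$, a real polarization $P$, and a prequantum line bundle) are exactly what we are given.

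To prove the vanishing I would proceed in three short steps. First, since $M$ is a smooth manifold it is paracompact and locally contractible, so \v{C}ech cohomology with the locally constant sheaf $\cshf$ agrees with singular cohomology $H^1(M,\bC^\times)$ by the standard comparison theorem. Second, because $\bC^\times$ is a divisible (hence injective) abelian group, the $\mathrm{Ext}$ term in the universal coefficient theorem vanishes and we obtain a natural isomorphism
\bge
H^1(M,\bC^\times)\cong \Hom(H_1(M,\bZ),\bC^\times),
\ene
so it suffices to show that $H_1(M,\bZ)=0$.

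For this I would invoke the well-known toric fact that a smooth compact symplectic toric manifold admits a CW decomposition all of whose cells have even real dimension. One way to see this is to apply Morse theory to a generic linear component $f=\ell\circ \mu$ of the moment map $\mu:M\to \mft^*$: the critical set of $f$ coincides with the fixed-point set of the torus action, and at each such fixed point the isotropy representation splits the tangent space into two-dimensional real weight spaces, forcing every Morse index to be even. Standard Morse-theoretic reasoning then yields the required CW structure, so $H_1(M,\bZ)=0$ (and in fact $M$ is simply connected). Combining the three steps gives $\check{H}^1(M,\cshf)=0$, after which Main Theorems 1 and 2 deliver both claimed isomorphisms.

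There is no serious obstacle in this argument: once the Morse-theoretic input on moment maps is cited, the rest is bookkeeping between cohomology theories. The only mildly technical point is the identification of \v{C}ech cohomology with constant $\bC^\times$ coefficients and singular cohomology with the same coefficients, which is standard for locally contractible paracompact Hausdorff spaces and can be referenced rather than reproved.
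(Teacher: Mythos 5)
Your proposal is correct, and its overall shape is the same as the paper's: both proofs consist of verifying the hypothesis $\check{H}^1(M,\cshf)=0$ and then quoting Main Theorems 1 and 2. The difference is in how the topological vanishing is obtained. The paper cites Fulton's result that (smooth compact) toric varieties are simply connected and then uses the identification $H^1(M,\bC^\times)\cong\Hom(\pi_1(M),\bC^\times)$; you instead prove $H_1(M,\bZ)=0$ directly, by taking a generic component $\ell\circ\mu$ of the moment map as a Morse function whose critical points are the torus fixed points with even indices, obtaining a CW structure with only even cells, and then passing to $H^1(M,\bC^\times)\cong\Hom(H_1(M,\bZ),\bC^\times)$ via the universal coefficient theorem (the Ext term dying because $\bC^\times$ is divisible, or simply because $H_0$ is free). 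Your route is slightly longer but has the merit of staying entirely inside the symplectic category: it does not need Delzant's identification of the symplectic toric manifold with an algebraic toric variety in order to invoke Fulton, and it makes explicit the comparison between \v{C}ech cohomology of the constant sheaf $\cshf$ and singular cohomology, which the paper leaves implicit. The paper's route is a one-line citation and, note, secretly passes through the same algebra anyway, since $\Hom(\pi_1(M),\bC^\times)=\Hom(H_1(M,\bZ),\bC^\times)$ by abelianization. Both arguments are sound; yours is marginally more self-contained, the paper's more economical.
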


\begin{proof}
Since $M$ is a compact toric variety, it follows that $\pi_1(M)=0$ \cite[First Proposition, Section 3.2]{fulton}.  Since 
\bge
H^1(M,\bC^\times)\cong \Hom(\pi_1(M),\bC^\times),
\ene
it then follows that $H^1(M,\bC^\times)=0$. Thus, the hypotheses of Main Theorems 1 and 2 hold. 
\end{proof}

As an application of this result, I obtained a kind of universality for quantization of toric manifolds under twisting. In more detail, let $T\cong (S^1)^n$ be an $n$-torus with $\mft=\text{Lie}(T)$ and $\mu:(M,\omega)\to \mft^*$ be a compact $T$-toric manifold. The momentum map $\mu$ naturally defines a polarization $P(\mu)\subset TM$ on $(M,\omega)$, where for each $x\in M$ we define
\begin{equation}\label{toricpol}
P(\mu)_x:=T_x\mu^{-1}(\mu(x)).
\end{equation}

Given a symplectomorphism $\Phi:M\to M$, one can twist the action of $T$ on $M$ by
\bge
T\times M\to M;\quad (t,x)\mapsto \Phi(t\cdot \Phi^{-1}(x)).
\ene
This is clearly a symplectic action. Furthermore, define 
\bge
\mu^{\Phi}:=\mu\circ \Phi^{-1}.
\ene
Then, $\mu^{\Phi}:(M,\omega)\to \mft^*$ is a $T$-toric manifold once again. Call this the twisting of $\mu:(M,\omega)\to \mft^*$ by $\Phi$.

Thus, each symplectomorphism $\Phi$ of a toric manifold generates a new polarization $P(\mu^\Phi)$ defined as in equation \ref{toricpol}, but with $\mu^{\Phi}$ replacing $\mu$. It is a triviality to unwind the definitions to show that
\bge
P(\mu^\Phi)=\Phi^*P(\mu).
\ene
This computation together with the previous theorem provides us with the following result.

\begin{cor}\label{toricinvariant}
Quantization of a compact toric manifold is invariant under twisting by symplectomorphisms.
\end{cor}

Of course this is nothing new as Hamilton in \cite[Theorem 8.10]{ham} showed for any compact toric manifold $\mu:(M,\omega)\to \mft^*$ that
\bge
Q_{shf}(M,P(\mu),\nabla)\cong \bigoplus_{\Lambda\cap \mu(M)^\circ}\bC,
\ene
where $\Lambda$ is the dual of the lattice $\ker(\exp:\mft\to T)$ and $\mu(M)^\circ$ is the interior of the associated Delzant polytope to $M$. Since the cardinality of $\Lambda\cap \mu(M)^\circ$ is a isomorphism invariant of a toric manifold, we immediately arrive at Corollary \ref{toricinvariant}.

\end{document}